\numberwithin{equation}{section}
\newtheorem{theorem}{Theorem}[section]
\newtheorem{lemma}[theorem]{Lemma}
\title{Characterizations of centralizable mappings on algebras of locally measurable operators}
\author{\begin{tabular}{c} Guangyu An$^{1}$, Jun He$^{2}$\footnote{Corresponding author.
E-mail address: hejun$_{-}$12@163.com} and Jiankui Li$^{3}$
\\{\small\it  $^{1}$Department of Mathematics, Shaanxi University of Science and Technology}\\
{\small\it Xi'an 710021, China}
\\{\small\it $^{2}$Department of Mathematics, Anhui Polytechnic University}\\
{\small\it Wuhu 241000, China}
\\{\small\it $^{3}$Department of Mathematics, East China University of
Science and Technology}\\
{\small\it Shanghai 200237, China}
\end{tabular}}
\date{}
\begin{document}
\maketitle \abstract
A linear mapping $\phi$ from an algebra $\mathcal{A}$ into its bimodule $\mathcal M$ is called
a centralizable mapping at $G\in\mathcal{A}$ if $\phi(AB)=\phi(A)B=A\phi(B)$
for each $A$ and $B$ in $\mathcal{A}$ with $AB=G$. In this paper, we
prove that if $\mathcal M$ is a von Neumann algebra without direct summands of
type $\mathrm{I}_1$ and type $\mathrm{II}$, $\mathcal A$ is a $*$-subalgebra with
$\mathcal M\subseteq\mathcal A\subseteq LS(\mathcal{M})$ and $G$ is a fixed element in $\mathcal A$, then
every continuous
(with respect to the local measure topology $t(\mathcal M)$) centralizable mapping at $G$
from $\mathcal A$ into $\mathcal M$ is a centralizer.

\
{\textbf{Keywords:}}
centralizable mapping, centralizer, von Neumann algebra, locally measurable operator

\
{\textbf{Mathematics Subject Classification(2010):}}46L57; 47L35; 46L50

\
\section{Introduction}\

Let $\mathcal{A}$ be an associative algebra
over the complex field $\mathbb{C}$, $\mathcal{M}$ be an $\mathcal{A}$-bimodule,
and $L(\mathcal A,\mathcal M)$ be the set of all linear mappings from $\mathcal{A}$ into $\mathcal{M}$.
If $\mathcal A=\mathcal M$, then denote $L(\mathcal A,\mathcal A)$ by $L(\mathcal A)$.
A linear mapping $\phi$ in $L(\mathcal A,\mathcal M)$ is
called a \emph{centralizer} if $\phi(AB)=\phi(A)B=A\phi(B)$ for each $A$ and $B$ in $\mathcal{A}$.
In particular, if $\mathcal A$ is a unital algebra with a unit element $I$, then
$\phi$ is a centralizer if and only if $\phi(A)=\phi(I)A=A\phi(I)$ for every $A$
in $\mathcal A$.

Let $G$ be a fixed element in $\mathcal A$. A linear mapping $\phi$ in $L(\mathcal A,\mathcal M)$ is called
a \emph{centralizable mapping at $G$} if $\phi(AB)=\phi(A)B=A\phi(B)$
for each $A$ and $B$ in $\mathcal{A}$ with $AB=G$.
Moreover, we say that $G$ is a \emph{full-centralizable point of $L(\mathcal A,\mathcal M)$} if
every centralizable mapping at $G$ from $\mathcal A$ into $\mathcal M$ is a centralizer.

Suppose that $\mathcal R$ is a prime ring with a nontrival idempotent,
in \cite{501}, M. Bre\v{s}ar shows that zero is a full-centralizable point of $L(\mathcal R)$;
and in \cite{503}, X. Qi shows that every nontrival
idempotent in $\mathcal R$ is a full-centralizable point of $L(\mathcal R)$.
In \cite{519}, W. Xu, R. An and J. Hou prove that if $\mathcal H$
is a Hilbert space with $\mathrm{dim}\mathcal H\geq2$, then every element $G$ in $B(\mathcal H)$ is
a full-centralizable point of $L(B(\mathcal H))$.
In \cite{Hejun}, J. He, J. Li and W. Qian prove that if $\mathcal M$ is a von Neumann algebra,
then every element $G$ in $\mathcal M$ is
a full-centralizable point of $L(\mathcal M)$.

Let $\mathcal H$ be a complex Hilbert space
and $B(\mathcal H)$ be the algebra of all bounded linear operators on $\mathcal H$.
Suppose that $\mathcal{M}$ is a von Neumann algebra on
$\mathcal{H}$ and
$\mathcal Z(\mathcal M)=\mathcal M\cap\mathcal M'$
is the center of $\mathcal M$, where
$\mathcal M'=\{A\in B(\mathcal H):AB=BA~\mathrm{for~every}~B~\mathrm{in}~\mathcal M\}.$
Denote by
$\mathcal P(\mathcal M)=\{P\in\mathcal M:P=P^*=P^2\}$
the lattice of all projections in $\mathcal M$ and by $\mathcal P_{fin}(\mathcal M)$
the set of all finite projections in $\mathcal M$.

Let $T$ be a closed densely defined linear operator on $\mathcal H$
with the domain $\mathcal D(T)$, where $\mathcal D(T)$ is a linear subspace of $\mathcal H$.
$T$ is said to be \emph{affiliated} with $\mathcal M$,
denote by $T\eta\mathcal M$, if
$U^*TU=T$ for every unitary element $U$ in $\mathcal{M}'$.

A linear operator $T$ affiliated with $\mathcal M$
is said to be \emph{measurable} with respect to
$\mathcal M$, if there exists a sequence $\{P_n\}_{n=1}^{\infty}\subset\mathcal P(\mathcal M)$ such that $P_n\uparrow1$,
$P_n(\mathcal H)\subset\mathcal D(T)$ and $P_n^{\bot}=I-P_n\in \mathcal P_{fin}(\mathcal M)$ for every $n\in\mathbb{N}$,
where $\mathbb{N}$ is the set of all natural numbers. Denote by $S(\mathcal M)$ the set of all measurable operators
affiliated with the von Neumann algebra $\mathcal M$.

A linear operator $T$ affiliated with $\mathcal M$
 is said to be \emph{locally measurable} with respect to
$\mathcal M$, if there exists a sequence $\{Z_n\}_{n=1}^{\infty}\subset\mathcal P(\mathcal Z(\mathcal M))$
such that $Z_n\uparrow I$ and $Z_nT\in S(\mathcal M)$ for every $n\in\mathbb{N}$.
Denote by $LS(\mathcal M)$ the set of all locally measurable operators
affiliated with the von Neumann algebra $\mathcal M$.

In \cite{M. Muratov}, M. Muratov and V. Chilin prove that $S(\mathcal M)$ and $LS(\mathcal M)$
are both unital $*$-algebras and $\mathcal M\subset S(\mathcal M)\subset LS(\mathcal M)$;
the authors also show that if $\mathcal M$ is a finite von Neumann algebra
or $\mathrm{dim}(\mathcal Z(\mathcal M))<\infty$, then
$S(\mathcal M)=LS(\mathcal M)$; if $\mathcal M$ is a type III von Neumann algebra and
$\mathrm{dim}(\mathcal Z(\mathcal M))=\infty$, then $S(\mathcal M)=\mathcal M$ and
$LS(\mathcal M)\neq\mathcal M$.

In \cite{Segal}, I. Segal shows that the algebraic and topological properties
of the measurable operators algebra $S(\mathcal M)$ are similar to the von Neumann algebra $\mathcal M$.
If $\mathcal M$ is a commutative von Neumann algebra,
then $\mathcal M$ is $*$-isomorphic to the algebra
$L^\infty(\Omega,\Sigma,\mu)$ of all essentially bounded measurable
complex functions on a measure space $(\Omega,\Sigma,\mu)$; and
$S(\mathcal M)$ is $*$-isomorphic to the algebra $L^0(\Omega,\Sigma,\mu)$
of all measurable almost everywhere finite complex-valued functions on
$(\Omega,\Sigma,\mu)$. In \cite{Ber1}, A. Ber, V. Chilin and F. Sukochev show that there exists a derivation on $L^0(0,1)$
is not an inner derivation, and the derivation is discontinuous in the measure topology.
This result means that the properties of derivations on $S(\mathcal M)$
are different from the derivations
on $\mathcal M$.

So far, there are no papers on the study of the centralizable mappings on algebras of locally measurable operators.
This paper is organized as follows.

In Section 2, we suppose that $\mathcal M$ is a von Neumann algebra and
recall the definition of local measurable topology $t(\mathcal M)$ on $LS(\mathcal M)$.

Let $\mathcal A$ be a subalgebra of $LS(\mathcal M)$. Denote by $L_{t(\mathcal M)}(\mathcal A, LS(\mathcal{M}))$ the set of all continuous
linear mappings with respect to the local measure topology $t(\mathcal M)$ from $\mathcal A$ into $LS(\mathcal{M})$.
Suppose that $G$ is a fixed element in $\mathcal A$, we say that $G$ is a full-centralizable point
of $L_{t(\mathcal M)}(\mathcal A,LS(\mathcal M))$ if
every continuous (with respect to the
local measure topology $t(\mathcal M)$) centralizable mapping at $G$
from $\mathcal A$ into $\mathcal M$ is a centralizer.

In Section 3, we show that if $\mathcal M$ is a von Neumann algebra without direct summands of
type $\mathrm{I}_1$ and type $\mathrm{II}$, and $\mathcal A$ is a $*$-subalgebra with
$\mathcal M\subseteq\mathcal A\subseteq LS(\mathcal{M})$, then every element
$G$ in $\mathcal A$ is a full-centralizable point of $L_{t(\mathcal M)}(\mathcal A,LS(\mathcal M))$.

\section{Preliminaries}\

Let $\mathcal H$ be a complex Hilbert space
and $\mathcal{M}$ be a von Neumann algebra on
$\mathcal{H}$.
Suppose that $T$ is a closed operator with a dense domain $\mathcal D(T)$ in $\mathcal H$.
Let $T=U|T|$ be the polar decomposition of $T$, where $|T|=(T^*T)^{\frac{1}{2}}$ and
$U$ is a partial isometry in $B(\mathcal H)$. Denote by $l(T)=UU^*$ the
\emph{left support} of $T$ and by $r(T)=U^*U$ the
\emph{right support} of $T$, clearly, $l(T)\sim u(T)$.
In \cite{M. Muratov},  M. Muratov and V. Chilin show that $T\in S(\mathcal M)$ (resp. $T\in LS(\mathcal M)$)
if and only if $|T|\in S(\mathcal M)$ (resp. $|T|\in LS(\mathcal M)$)
and $U\in\mathcal M$.

In the following, we recall the definition of the local
measure topology.
Let $\mathcal M$ be a commutative von Neumann algebra,
in \cite{M. Takesaki}, M. Takesaki proves that there exists a $*$-isomorphism from
$\mathcal M$ onto the $*$-algebra
$L^\infty(\Omega,\Sigma,\mu)$, where $\mu$ is a measure
satisfying the direct sum property. The direct sum property
means that the Boolean algebra of all projections
in $L^\infty(\Omega,\Sigma,\mu)$ is total order, and for every
non-zero projection $p$ in $\mathcal M$, there exists a non-zero projection $q\leq p$
with $\mu(q)<\infty$. Consider $LS(\mathcal M)=S(\mathcal M)=L^0(\Omega,\Sigma,\mu)$
of all measurable almost everywhere finite complex-valued functions on $(\Omega,\Sigma,\mu)$.
Define the local measure topology $t(L^\infty(\Omega))$ on $L^0(\Omega,\Sigma,\mu)$, that is,
the Hausdorff vector topology, whose base of neighborhoods of zero is given by
\begin{align*}
W(B,\varepsilon,\delta)=&\{f\in L^0(\Omega,\Sigma,\mu): \mathrm{there~exists~a~set}~E\in\Sigma~\mathrm{such~that}\\
&E\subset B, \mu(B\backslash E)\leq\delta,f_{\chi_{E}}\in L^\infty(\Omega,\Sigma,\mu),\|f_{\chi_{E}}\|_{L^\infty(\Omega,\Sigma,\mu)}\leq\varepsilon\},
\end{align*}
where $\varepsilon,\delta>0,B\in\Sigma,\mu(B)<\infty$ and $\chi_{E}(\omega)=1$ when $\omega\in E$, $\chi_{E}(\omega)=0$ when $\omega\notin E$.
Suppose that $\{f_\alpha\}\subset L^0(\Omega,\Sigma,\mu)$ and $f\in L^0(\Omega,\Sigma,\mu)$, if
$f_\alpha\chi_B\rightarrow f\chi_B$ in the measure $\mu$ for every $B\in\Sigma$ with $\mu(B)<\infty$,
then we denote by
$f_\alpha\xrightarrow{t(L^\infty(\Omega))}f$. In \cite{F. Yeadon}, Yeadon show the topology
$t(L^\infty(\Omega))$ dose not change if the measure $\mu$ is replaced with an equivalent measure.

If $\mathcal M$ is an arbitrary von Neumann algebra
and $\mathcal Z(\mathcal M)$
is the center of $\mathcal M$, then
there exists a $*$-isomorphism $\varphi$ from
$\mathcal Z(\mathcal M)$ onto the $*$-algebra
$L^\infty(\Omega,\Sigma,\mu)$, where $\mu$ is a measure
satisfying the direct sum property.
Denote by $L^+(\Omega,\Sigma,\mu)$
the set of all measurable real-valued positive functions on $(\Omega,\Sigma,\mu)$.
In \cite{Segal}, Segal shows that there exists a mapping
$\Delta$ from $\mathcal P(\mathcal M)$ into $L^+(\Omega,\Sigma,\mu)$ satisfying the following conditions:\\
($\mathbb{D}_{1}$) $\Delta(P)\in L_+^0(\Omega,\Sigma,\mu)$ if and only if $P\in \mathcal P_{fin}(\mathcal M)$;\\
($\mathbb{D}_{2}$) $\Delta(P\vee Q)=\Delta(P)+\Delta(Q)$ if $PQ=0$;\\
($\mathbb{D}_{3}$) $\Delta(U^*U)=\Delta(UU^*)$ for every partial isometry $U\in\mathcal M$;\\
($\mathbb{D}_{4}$) $\Delta(ZP)=\varphi(Z)\Delta(P)$ for every $Z\in\mathcal P(\mathcal Z(\mathcal M))$
and every $P\in\mathcal P(\mathcal M)$;\\
($\mathbb{D}_{5}$) if $P_\alpha,P\in\mathcal P(\mathcal M),\alpha\in\Gamma$ and $P_\alpha\uparrow P$, then $\Delta(P)=\mathrm{sup}_{\alpha\in\Gamma}\Delta(P_\alpha)$.\\
In addition, $\Delta$ is called a \emph{dimension function} on $\mathcal P(\mathcal M)$
and $\Delta$ also satisfies the following two conditions:\\
($\mathbb{D}_{6}$) if $\{P_n\}_{n=1}^\infty\subset\mathcal P(\mathcal M)$,
then $\Delta(\mathrm{sup}_{n\geq1}P_n)\leq\sum_{n=1}^{\infty}\Delta(P_n)$; moreover, if $P_nP_m=0$
when $n\neq m$, then $\Delta(\mathrm{sup}_{n\geq1}P_n)=\sum_{n=1}^{\infty}\Delta(P_n)$;\\
($\mathbb{D}_{7}$) if $\{P_n\}_{n=1}^\infty\subset\mathcal P(\mathcal M)$ and $P_n\downarrow0$, then $\Delta(P_n)\rightarrow0$ almost everywhere.

For arbitrary scalars $\varepsilon,\gamma>0$ and a set $B\in\Sigma$, $\mu(B)<\infty$, we let
\begin{align*}
V(B,\varepsilon,\gamma)=&\{T\in LS(\mathcal M):~\mathrm{there~exist}~P\in\mathcal P(\mathcal M)~\mathrm{and}~Z\in\mathcal P(Z(\mathcal M)~\mathrm{such~that}\\&TP\in\mathcal M, \|TP\|_{\mathcal M}\leq\varepsilon,\varphi(Z^\bot)\in W(B,\varepsilon,\gamma)~\mathrm{and}~\Delta(ZP^\bot)\leq\varepsilon\varphi(Z)\},
\end{align*}
where $\|\cdot\|_{\mathcal M}$ is the $C^*$-norm on $\mathcal M$.
In \cite{F. Yeadon}, Yeadon shows that the system of sets
$$\{T+V(B,\varepsilon,\gamma):T\in LS(\mathcal M),\varepsilon,\gamma>0,B\in\Sigma~\mathrm{and}~\mu(B)<\infty\}$$
defines a Hausdorff vector topology $t(\mathcal M)$ on $LS(\mathcal M)$ and the sets $$\{T+V(B,\varepsilon,\gamma),\varepsilon,\gamma>0,B\in\Sigma~\mathrm{and}~\mu(B)<\infty\}$$
form a neighborhood base of a local measurable operator $x$ in $LS(\mathcal M)$.
In \cite{F. Yeadon}, Yeadon also proves that $(LS(\mathcal M),t(\mathcal M))$ is
a complete topological $*$-algebra, and the topology $t(\mathcal M)$
does not depend on the choices of dimension function $\Delta$ and $*$-isomorphism $\varphi$.
The topology $t(\mathcal M)$ on $LS(\mathcal M)$ is called the \emph{local measure topology}.
Moreover, if $\mathcal M=B(\mathcal H)$, then $LS(\mathcal M)=\mathcal M$ and the
local measure topology topology $t(\mathcal M)$
coincides with the uniform topology $\|\cdot\|_{B(\mathcal H)}$.

The following lemma will be used repeatedly.

\begin{lemma}\cite{M. Muratov1}\label{201}
Suppose that $\mathcal M$ is a von Neumann algebra without direct summand of type
$\mathrm{II}$. For every $A$ in $LS(\mathcal M)$,
there exists a sequence $\{Z_i\}$ of mutually orthogonal central projections in $\mathcal{M}$
with $\sum_{i=1}^{\infty}Z_i=I$, such that $A=\sum_{i=1}^{\infty}Z_iA$ and $Z_iA\in\mathcal{M}$ for every $i$.
\end{lemma}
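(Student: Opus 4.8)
The plan is to reduce the statement to a concrete spectral cutting for positive operators and to isolate the role of the hypothesis ``no type $\mathrm{II}$ summand'' in a single claim about central supports. Throughout, for a projection $P$ I write $c(P)$ for its central support, the smallest central projection dominating $P$.

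First I would reduce to the case $A\ge 0$. Using the polar decomposition $A=U|A|$ with $U\in\mathcal M$ and $|A|\in LS(\mathcal M)$ positive, it suffices to produce mutually orthogonal central projections $\{Z_i\}$ with $\sum_i Z_i=I$ and $Z_i|A|\in\mathcal M$; since each $Z_i$ is central, $Z_iA=UZ_i|A|\in\mathcal M$, and $\sum_i Z_i=I$ yields $A=\sum_i Z_iA$. Next I would reduce from $LS(\mathcal M)$ to $S(\mathcal M)$: by the definition of local measurability there are central projections $W_n\uparrow I$ with $W_n|A|\in S(\mathcal M)$, and after orthogonalizing ($W_1'=W_1$, $W_n'=W_n-W_{n-1}$) one gets mutually orthogonal central projections summing to $I$ with $W_n'|A|\in S(W_n'\mathcal M)$. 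Each corner $W_n'\mathcal M$ again has no type $\mathrm{II}$ summand, so it is enough to treat a single positive $B\in S(\mathcal N)$ for a von Neumann algebra $\mathcal N$ without type $\mathrm{II}$ summand and then reassemble the doubly-indexed family of central projections into one sequence.

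For such a $B$ I would use its spectral family $\{E_\lambda\}\subset\mathcal N$. Because $B$ is measurable, the tail projections $F_k:=E_k^{\bot}=E_{(k,\infty)}(B)$ lie in $\mathcal P_{fin}(\mathcal N)$ for all large $k$ and satisfy $F_k\downarrow 0$. Let $C_k$ be the largest central projection with $C_k\le E_k$, equivalently $C_k=I-c(F_k)$. Since $C_k\le E_k$ one has $BC_k=(BE_k)C_k\in\mathcal N$, because $BE_k$ is bounded with norm $\le k$. Thus if I can show $C_k\uparrow I$, then orthogonalizing the $C_k$ yields mutually orthogonal central projections summing to $I$, each cutting $B$ to a bounded operator, and the lemma follows after reindexing the doubly-indexed family.

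The heart of the matter, and the main obstacle, is therefore the claim that $c(F_k)\downarrow 0$ for any decreasing sequence of finite projections $F_k\downarrow 0$ in an algebra without type $\mathrm{II}$ summand; this is exactly where the hypothesis enters. I would prove it by setting $Z_0:=\bigwedge_k c(F_k)$ and showing $Z_0=0$. On the type $\mathrm{III}$ part there are no nonzero finite projections, so $Z_0F_k=0$ there and that portion of $Z_0$ vanishes. On the type $\mathrm{I}$ part I would exploit the discreteness of the dimension function $\Delta$: a nonzero finite projection of central support $Z$ in a type $\mathrm{I}$ algebra has dimension bounded below, on the support of $\varphi(Z)$, by a strictly positive central function independent of the projection, so $\Delta(Z_0F_k)\ge\varphi(Z_0)$ for every $k$. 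On the other hand $\mathbb{D}_7$ gives $\Delta(Z_0F_k)\to 0$ almost everywhere, forcing $\varphi(Z_0)=0$ and hence $Z_0=0$. This is precisely the step that fails for type $\mathrm{II}$, whose dimension function is continuously valued and admits finite projections of full central support but arbitrarily small dimension. Establishing the needed discreteness cleanly, via the homogeneous decomposition of type $\mathrm{I}$ algebras and the identity $\Delta(P)=\varphi(c(P))$ for abelian $P$, is the technical core I expect to demand the most care.
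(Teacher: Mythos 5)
The first thing to say is that the paper contains no proof of this lemma to compare against: it is imported verbatim from Muratov and Chilin \cite{M. Muratov1}, where it follows from their description of $LS(\mathcal M)$ as the central extension of $\mathcal M$ for von Neumann algebras with no type $\mathrm{II}$ summand. Judged on its own merits, your argument is essentially correct, and it reconstructs the standard line of proof. All the reductions are sound: the polar decomposition with $U\in\mathcal M$ reduces to $A\ge 0$; orthogonalizing the defining sequence of central projections for local measurability reduces to a positive $B\in S(\mathcal N)$ with $\mathcal N$ a central cut-down, hence again without type $\mathrm{II}$ summand; measurability of $B\ge 0$ is equivalent to the spectral tails $F_k=E_{(k,\infty)}(B)$ being finite for large $k$, and $F_k\downarrow 0$ because $B$ is almost everywhere finite; $C_k=I-c(F_k)$ is indeed the largest central projection below $E_k$, and $BC_k=(BE_k)C_k$ is bounded and affiliated, hence in $\mathcal N$. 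Your crux claim, that finite $F_k\downarrow 0$ forces $c(F_k)\downarrow 0$ in the absence of type $\mathrm{II}$, is true and is correctly identified as exactly where the hypothesis enters; the type $\mathrm{I}$ estimate you worry about can be closed without any homogeneous decomposition: in a type $\mathrm{I}$ algebra every nonzero projection dominates an abelian projection with the \emph{same} central support (a maximality argument, cf.\ \cite{514}), abelian projections with equal central supports are equivalent, so fixing one abelian $Q_0$ with $c(Q_0)=Z_I$ and using $(\mathbb{D}_{3})$, $(\mathbb{D}_{4})$ and faithfulness of $\Delta$ gives $\Delta(P)\ge\varphi(c(P))\,\Delta(Q_0)$ with $\Delta(Q_0)>0$ a.e.\ on the support of $\varphi(Z_I)$, which yields your contradiction with $(\mathbb{D}_{7})$. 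Two small cautions: $(\mathbb{D}_{7})$ as stated in the paper is false for infinite projections (tails of an orthonormal basis in $B(\mathcal H)$ give $P_n\downarrow 0$ with $\Delta(P_n)\equiv\infty$), so the finiteness of the $F_k$ is what licenses its use -- your argument respects this, but it is worth saying explicitly; and the identity $A=\sum_{i=1}^{\infty}Z_iA$ requires the (true, but citable) fact that central sums of this kind converge in $t(\mathcal M)$, which you assert without comment.
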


\section{Centralizable mappings on algebras of locally measurable operators}\

The following theorem is the main result in this paper.

\begin{theorem}\label{301}
Suppose that $\mathcal M$ is a von Neumann algebra without direct summands of type $\mathrm{I}_1$ and type
$\mathrm{II}$, $\mathcal A$ is a $*$-subalgebra of $LS(\mathcal{M})$ containing $\mathcal M$.
Then every element $G$ in $\mathcal{A}$ is a full-centralizable point of $L_{t(\mathcal M)}(\mathcal A,LS(\mathcal M))$.
\end{theorem}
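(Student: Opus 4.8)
The plan is to split the work into a soft topological reduction, powered by Lemma~\ref{201}, and a hard algebraic core that uses the absence of type $\mathrm{I}_1$ summands. Fix a $t(\mathcal M)$-continuous $\phi$ that is centralizable at $G$. Since $AB=G$ forces $\phi(AB)=\phi(G)$, the hypothesis says precisely that $\phi(A)B=\phi(G)$ and $A\phi(B)=\phi(G)$ whenever $AB=G$. I want the two identities $\phi(AB)=\phi(A)B$ (left centralizer) and $\phi(AB)=A\phi(B)$ (right centralizer) for all $A,B\in\mathcal A$. These are interchanged by the involution: if I set $\psi(X)=\phi(X^*)^*$, then $\psi$ is again linear, $t(\mathcal M)$-continuous (the involution is $t(\mathcal M)$-continuous), and centralizable at $G^*$, and ``$\psi$ is a left centralizer'' is exactly ``$\phi$ is a right centralizer''. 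Hence it suffices to prove the single assertion, for \emph{every} fixed element $G$, that a continuous centralizable mapping at $G$ is a left centralizer; applying this both to $\phi$ and to $\psi$ then yields that $\phi$ is a centralizer.

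For the topological reduction I would record that $(LS(\mathcal M),t(\mathcal M))$ is a complete topological $*$-algebra, so multiplication is jointly $t(\mathcal M)$-continuous. By Lemma~\ref{201} --- this is the one place the no-type-$\mathrm{II}$ hypothesis is spent --- each $A\in\mathcal A\subseteq LS(\mathcal M)$ is the $t(\mathcal M)$-limit of the partial sums $\sum_{i=1}^{n}Z_iA\in\mathcal M$, where $\{Z_i\}$ are mutually orthogonal central projections with $\sum_i Z_i=I$; thus $\mathcal M$ is $t(\mathcal M)$-dense in $\mathcal A$. Consequently, if $A,B\in\mathcal A$ are approximated by $A_n,B_n\in\mathcal M$, then $\phi(A_nB_n)\to\phi(AB)$ and $\phi(A_n)B_n\to\phi(A)B$, so the identity $\phi(AB)=\phi(A)B$ on all of $\mathcal A$ follows once it is known for pairs drawn from $\mathcal M$. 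The same central projections can also be used, summand by summand, to reduce to the case $G\in\mathcal M$ should that be convenient: cutting by $Z_i$ replaces the data by $Z_i\mathcal M\subseteq Z_i\mathcal A\subseteq LS(Z_i\mathcal M)$ with $Z_iG\in Z_i\mathcal M$, and $Z_i\mathcal M$ inherits the property of having no type $\mathrm{I}_1$ or type $\mathrm{II}$ summand.

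The algebraic core is then to verify $\phi(AB)=\phi(A)B$ for $A,B\in\mathcal M$, knowing only factorizations $XY=G$ inside $\mathcal A$. Here I would invoke the absence of a type $\mathrm{I}_1$ summand to obtain, on each central piece, a system of $2\times2$ matrix units (a halving projection in the type $\mathrm{III}$ part and the standard units in each type $\mathrm{I}_n$, $n\ge 2$, part) summing to $I$, together with the induced Peirce decompositions of $\mathcal M$, $\mathcal A$ and $LS(\mathcal M)$. Organizing everything by the left and right supports $l(G),r(G)\in\mathcal P(\mathcal M)$ of $G$, I would treat the corner on which $G$ is one-sided invertible separately from the complementary corners. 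On the invertible corner, factoring $G$ through suitable invertible elements forces $\phi$ there to coincide with left multiplication by a fixed element built from $\phi(G)$ and the inverse of $G$ on that corner; on the complementary corners the hypothesis degenerates to centralizability at $0$, where the relation $PP^{\perp}=0$ for support-refining projections $P$ gives $\phi(P)=P\phi(P)P$ and pins down $\phi$. The matrix units then recombine the corner-wise information into the single identity $\phi(AB)=\phi(A)B$ on $\mathcal M$.

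The step I expect to be the main obstacle is precisely this structural core for a \emph{general} $G$ that is neither $0$ nor one-sided invertible. The centralizable hypothesis only feeds on factorizations that genuinely occur in $\mathcal A$, so the difficulty is to manufacture enough such factorizations from the support projections and matrix units while keeping every auxiliary element inside $\mathcal A$ and controlled in $t(\mathcal M)$, and then to show that the descriptions obtained on the various Peirce corners are mutually consistent and glue into one left centralizer. By contrast, the $*$-duality reduction of the first paragraph and the density-plus-continuity passage of the second are routine once the $\mathcal M$-level identity is established.
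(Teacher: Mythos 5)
Your soft reductions are sound and in fact mirror devices the paper itself uses: the $*$-duality $\psi(X)=\phi(X^*)^*$ exchanging left and right centralizer identities is exactly the trick the paper applies in Case 2 of its proof (to pass from $G_1^*$ to $G_1$), and the $t(\mathcal M)$-density of $\mathcal M$ in $\mathcal A$ via Lemma \ref{201}, combined with continuity of $\phi$, is how the paper repeatedly upgrades identities from $\mathcal M$ to $\mathcal A$. But your proposal has a genuine gap, and you name it yourself: the entire structural core for a general $G$ is left as ``the main obstacle.'' That core is precisely the content of the theorem. The paper resolves it by splitting $I=Q_1+Q_2+Q_3$ with $Q_1=I-\mathcal{C}(I-P)$, $Q_2=I-\mathcal{C}(P)$, $Q_3=I-Q_1-Q_2$, where $P$ is the \emph{range projection} of $G$ and $\mathcal C$ denotes the central carrier: on $Q_1$ the cut-down $G_1$ has dense range (and is injective, or one passes to $G_1^*$ by the duality trick); on $Q_2$ one has $G_2=0$, handled by Bre\v{s}ar's zero-product theorem after noting $\mathcal M$ is generated by idempotents --- which, contrary to your accounting, is where the no-type-$\mathrm{I}_1$ hypothesis is spent, not on manufacturing matrix units; and on $Q_3$ both $P_3$ and $Q_3-P_3$ have full central carrier, which is the hard case, settled by the Peirce decomposition relative to $P,I-P$ and the explicit parameterized factorizations such as
\begin{equation*}
(A_{11}+tA_{11}A_{12})\bigl(A_{11}^{-1}G-A_{12}A_{22}+t^{-1}A_{22}\bigr)=G,
\end{equation*}
whose coefficients in $t$ and $t^{-1}$ yield $\phi(\mathcal A_{ij})\subseteq\mathcal B_{ij}$ and $\phi(A_{ij})=\phi(P_i)A_{ij}=A_{ij}\phi(P_j)$. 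Nothing in your sketch supplies these factorizations or the gluing lemma (the paper's Lemma \ref{303}, whose proof itself needs the nontrivial step $\phi(Q_i\mathcal A)\subseteq Q_iLS(\mathcal M)$, proved by another $t$-parameter factorization through invertibles).

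Beyond incompleteness, one concrete element of your plan would fail as stated: you propose to isolate ``the corner on which $G$ is one-sided invertible'' using $l(G)$ and $r(G)$. In general $G$ is invertible on no corner --- an injective $G$ with dense range need not be bounded below, so compression to its support projections yields no inverse. The paper avoids this by working with \emph{separating points} instead: when $G$ is injective with dense range, $AG=0$ forces $A=0$ (shown via the central decomposition of Lemma \ref{201}), and then $\phi(I)G=\phi(A)A^{-1}G$ for invertible $A\in\mathcal M$ already gives $\phi(A)=\phi(I)A$ without any invertibility of $G$. Likewise, your suggestion that ``the complementary corners degenerate to centralizability at $0$'' does not hold cornerwise; the reduction to the $G=0$ case is valid only on the central summand $Q_2$ where $Q_2G$ literally vanishes, and the genuinely mixed part $Q_3$ requires the two-sided Peirce analysis sketched above, not a zero-product argument. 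So the proposal is a reasonable outline of the easy periphery, but the theorem's actual proof --- central-carrier trichotomy, separating-point lemma, and the $t$-parameterized factorization identities --- is missing at exactly the point you flag.
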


To prove Theorem 3.1, we need the following lemmas.

\begin{lemma}\label{302}
Let $\mathcal{M}$ be a von Neumann algebra and $Z$ be a central projection in
$P(\mathcal Z(\mathcal M))$.
Then we have that $LS(Z\mathcal{M})=ZLS(\mathcal{M})$.
\end{lemma}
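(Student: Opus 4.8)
The plan is to prove the two inclusions $Z\,LS(\mathcal M)\subseteq LS(Z\mathcal M)$ and $LS(Z\mathcal M)\subseteq Z\,LS(\mathcal M)$ separately, throughout regarding $Z\mathcal M$ as a von Neumann algebra acting on the Hilbert space $Z\mathcal H$, with center $\mathcal Z(Z\mathcal M)=Z\mathcal Z(\mathcal M)$, and identifying an operator $S$ on $Z\mathcal H$ with the operator $S\oplus 0$ on $\mathcal H=Z\mathcal H\oplus Z^{\bot}\mathcal H$; under this identification $Z\,LS(\mathcal M)=\{ZT:T\in LS(\mathcal M)\}$ consists of the operators that vanish on $Z^{\bot}\mathcal H$. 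Two preliminary observations will carry the argument. First, since $Z\in\mathcal Z(\mathcal M)\subseteq\mathcal M'$, the projection $Z$ reduces every operator $T$ with $T\eta\mathcal M$, so that $T|_{Z\mathcal H}$ is a closed, densely defined operator on $Z\mathcal H$ affiliated with $Z\mathcal M$, and conversely the extension by $0$ on $Z^{\bot}\mathcal H$ of an operator affiliated with $Z\mathcal M$ is affiliated with $\mathcal M$. Second, for a projection $P\le Z$, any partial isometry in $\mathcal M$ with both supports under $Z$ already lies in $Z\mathcal M$, so Murray--von Neumann equivalence of subprojections of $Z$ is the same computed in $\mathcal M$ or in $Z\mathcal M$; consequently a projection $P\le Z$ is in $\mathcal P_{fin}(\mathcal M)$ if and only if it is finite in $Z\mathcal M$.

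Next I would reduce the locally measurable statement to the measurable one. Given $T\in LS(\mathcal M)$, choose central $Z_n\uparrow I$ in $\mathcal M$ with $Z_nT\in S(\mathcal M)$. Then the $ZZ_n$ are central projections of $Z\mathcal M$ with $ZZ_n\uparrow Z$ and $(ZZ_n)(ZT)=Z(Z_nT)$, so it suffices to show that $A\in S(\mathcal M)$ implies $ZA\in S(Z\mathcal M)$. Taking a defining sequence $P_k\uparrow I$ for $A$ (with $P_k\mathcal H\subseteq\mathcal D(A)$ and $P_k^{\bot}\in\mathcal P_{fin}(\mathcal M)$), the projections $ZP_k\uparrow Z$ lie in $Z\mathcal M$, map $Z\mathcal H$ into the domain of $ZA$, and have complements $Z-ZP_k=ZP_k^{\bot}\le P_k^{\bot}$, which are finite in $\mathcal M$ and hence finite in $Z\mathcal M$ by the second observation. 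This yields $ZA\in S(Z\mathcal M)$ and therefore $Z\,LS(\mathcal M)\subseteq LS(Z\mathcal M)$.

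For the reverse inclusion I would run the symmetric argument using extension by zero. Given $S\in LS(Z\mathcal M)$, put $\widetilde S=S\oplus 0$; by the first observation $\widetilde S\eta\mathcal M$, and clearly $Z\widetilde S=\widetilde S$. Choosing central $W_n\uparrow Z$ in $Z\mathcal M$ with $W_nS\in S(Z\mathcal M)$, the projections $W_n+Z^{\bot}\uparrow I$ are central in $\mathcal M$ and satisfy $(W_n+Z^{\bot})\widetilde S=(W_nS)\oplus 0$, so it again suffices to show that $B\in S(Z\mathcal M)$ implies $B\oplus 0\in S(\mathcal M)$. If $Q_k\uparrow Z$ is a defining sequence for $B$ in $Z\mathcal M$, then $Q_k+Z^{\bot}\uparrow I$ in $\mathcal M$ maps $\mathcal H$ into $\mathcal D(B\oplus 0)$ and has complement $Z-Q_k$, which is finite in $Z\mathcal M$ and hence in $\mathcal M$. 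Thus $B\oplus 0\in S(\mathcal M)$, whence $\widetilde S\in LS(\mathcal M)$ and $S=Z\widetilde S\in Z\,LS(\mathcal M)$.

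Setting aside the routine bookkeeping, the step I expect to be the genuine obstacle is the careful handling of the unbounded operators: verifying that a central projection reduces an affiliated operator (so that restriction and extension by zero produce closed, densely defined, affiliated operators with the expected domains), and checking that the defining projection sequences interact correctly with those domains. The algebraic fact that finiteness of subprojections of $Z$ is detected equally in $\mathcal M$ and in the corner $Z\mathcal M$ is the other point needing justification, but it follows cleanly from the localization of partial isometries under $Z$.
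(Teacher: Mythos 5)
Your proof is correct, but it takes a genuinely different route from the paper's. The paper disposes of the lemma in a few algebraic lines: since the unit of $LS(Z\mathcal{M})$ is $Z$, it writes $LS(Z\mathcal{M})=Z\,LS(Z\mathcal{M})\subseteq Z\,LS(\mathcal{M})$, and then from $\mathcal{M}=Z\mathcal{M}\oplus(I-Z)\mathcal{M}$ it deduces $Z\,LS(\mathcal{M})=Z\,LS(Z\mathcal{M})\oplus Z\,LS((I-Z)\mathcal{M})$ and observes the second summand vanishes. That argument silently relies on exactly the identifications you prove by hand: that $LS(Z\mathcal{M})$ embeds into $LS(\mathcal{M})$ (your extension-by-zero step, needed even to make sense of the first inclusion) and that $LS(\cdot)$ respects direct sums of von Neumann algebras (a structural fact from the Muratov--Chilin theory). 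By contrast, you verify everything from the definitions: a central projection reduces every operator affiliated with $\mathcal{M}$, so restriction to $Z\mathcal{H}$ and extension by zero pass between operators affiliated with $\mathcal{M}$ and with $Z\mathcal{M}$; the defining sequences transport correctly ($ZP_k\uparrow Z$ with complements $ZP_k^{\bot}\leq P_k^{\bot}$, and $Q_k+Z^{\bot}\uparrow I$ with complements $Z-Q_k$); and Murray--von Neumann equivalence of subprojections of $Z$ is computed identically in $\mathcal{M}$ and in the corner $Z\mathcal{M}$, since any implementing partial isometry with supports under $Z$ lies in $Z\mathcal{M}$, so finiteness localizes. Your reduction of the locally measurable case to the measurable one via $ZZ_n\uparrow Z$ and $W_n+Z^{\bot}\uparrow I$ (using $\mathcal{Z}(Z\mathcal{M})=Z\mathcal{Z}(\mathcal{M})$, which you correctly flag) is also sound. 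What the paper's route buys is brevity, at the cost of leaning on cited structure theory; what yours buys is a self-contained proof that in fact supplies the justification the paper's two displayed inclusions presuppose.
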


\begin{proof}
Since the unit element of $LS(Z\mathcal{M})$ is $Z$,
we have that
$$LS(Z\mathcal{M})=ZLS(Z\mathcal{M})\subseteq ZLS(\mathcal{M}).$$
Similarly, we can obtain that $LS((I-Z)\mathcal{M})\subseteq (I-Z)LS(\mathcal{M})$.
Clearly, $\mathcal{M}=Z\mathcal{M}\oplus(I-Z)\mathcal{M}$,
it follows that
$$ZLS(\mathcal{M})= ZLS(Z\mathcal{M})\oplus ZLS((I-Z)\mathcal{M}).$$
By $LS((I-Z)\mathcal{M})\subseteq(I-Z)LS(\mathcal{M})$, we know that $ZLS((I-Z)\mathcal{M})=0$.
It means that $ZLS(\mathcal{M})=ZLS(Z\mathcal{M})=LS(Z\mathcal{M})$.
\end{proof}

In the following, we always assume that $\mathcal M$ is a von Neumann algebra without direct summands of type $\mathrm{I}_1$ and type
$\mathrm{II}$ on a Hilbert space $\mathcal H$,
$\mathcal A$ is a $*$-subalgebra of $LS(\mathcal{M})$ containing $\mathcal M$.

\begin{lemma}\label{303}
Suppose that $G$ is a fixed element in $\mathcal{A}$ and $\{Q_i\}_{i=1}^{n}$ is a family of mutually orthogonal central projections in $\mathcal M$ with sum $I$.
If $Q_iG$ is a full centralizable point of $L_{t(\mathcal M)}(Q_i\mathcal A,Q_iLS(\mathcal M))$ for every $i\in\overline{1,n}$, then $G$
is a full centralizable point of $L_{t(\mathcal M)}(\mathcal A,LS(\mathcal M))$
\end{lemma}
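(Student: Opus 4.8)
The plan is to use the finite family $\{Q_i\}_{i=1}^{n}$ of central projections to split an arbitrary continuous centralizable mapping $\phi$ at $G$ into pieces on the summands $Q_i\mathcal A$, apply the hypothesis to each piece, and glue the results. Throughout I use that each $Q_i$ is central, so $Q_iT=TQ_i$ for every $T\in LS(\mathcal M)$, that $\mathcal A\supseteq\mathcal M$ is unital with $I=\sum_{i=1}^{n}Q_i$, and that $Q_iLS(\mathcal M)=LS(Q_i\mathcal M)$ by Lemma \ref{302}. I would define $\phi_i\colon Q_i\mathcal A\to Q_iLS(\mathcal M)$ by $\phi_i(X)=Q_i\phi(X)$; since both multiplication by $Q_i$ and $\phi$ are $t(\mathcal M)$-continuous, each $\phi_i$ is continuous for $t(Q_i\mathcal M)$.

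The central technical assertion, which I would isolate and prove first, is that $\phi$ respects the central decomposition: $Q_i\phi(Q_kA)=0$ whenever $i\neq k$, for every $A\in\mathcal A$. Granting this, $\phi_i$ is centralizable at $Q_iG$: given $X,Y\in Q_i\mathcal A$ with $XY=Q_iG$, I lift to $A=X+(I-Q_i)G$ and $B=Y+(I-Q_i)$, which lie in $\mathcal A$ and satisfy $AB=G$ because the cross terms $X(I-Q_i)$ and $(I-Q_i)GY$ vanish by centrality. Applying $\phi(AB)=\phi(A)B=A\phi(B)$ and multiplying by $Q_i$, the unwanted contributions collapse to $Q_i\phi((I-Q_i)G)$ and $Q_i\phi(I-Q_i)$, both of which are zero by the decomposition assertion (taking $A=G$ and $A=I$ respectively); what survives is exactly $\phi_i(XY)=\phi_i(X)Y=X\phi_i(Y)$. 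The hypothesis then makes each $\phi_i$ a centralizer, so $\phi_i(X)=\phi_i(Q_i)X=X\phi_i(Q_i)$ for $X\in Q_i\mathcal A$. Reassembling over the finite index set, the decomposition assertion gives, for every $A\in\mathcal A$, the identity $Q_i\phi(A)=Q_i\phi(Q_iA)=\phi_i(Q_iA)=\phi_i(Q_i)Q_iA=Q_iA\phi_i(Q_i)$; summing yields $\phi(A)=TA=AT$ with $T=\sum_{i=1}^{n}\phi_i(Q_i)$, and since this holds for all $A$ the element $T$ commutes with $\mathcal A$, so $\phi$ is a centralizer.

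The main obstacle is the decomposition assertion itself, and it is genuinely the heart of the argument, because a single factorization of the \emph{fixed} element $G$ only constrains $\phi$ modulo the left and right annihilators of $G$. I would attack it through the abundance of factorizations supplied by the unital overalgebra $\mathcal M$. On the support of $G$, the invertible one-parameter families $U_\lambda=Q_i+\lambda Q_i^{\perp}\in\mathcal M$ ($\lambda\neq0$) give $G=U_\lambda(U_\lambda^{-1}G)$; expanding $\phi(G)=\phi(U_\lambda)(U_\lambda^{-1}G)=U_\lambda\phi(U_\lambda^{-1}G)$ as a Laurent polynomial in $\lambda$ and equating coefficients forces the cross terms to vanish against $G$. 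Off the support, writing $e=l(G)$ and $f=r(G)$ (projections in $\mathcal M$, since the partial isometry in the polar decomposition of $G$ lies in $\mathcal M$), the deformations $G=G(f+(I-f)Y)$ and $G=(e+Y(I-e))G$, valid for every $Y\in\mathcal A$ because $G(I-f)=(I-e)G=0$, yield $G\phi((I-f)Y)=0$ and $\phi(Y(I-e))G=0$, so that $\phi$ preserves the complementary supports. Combining the support data from the scaling families with these kernel and cokernel relations, and using continuity of $\phi$ together with the $t(\mathcal M)$-density of $\mathcal M$ in $\mathcal A$ to reduce to bounded elements where invertible and singular factors are available, should yield $Q_i\phi(Q_kA)=0$ for $i\neq k$. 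Carrying out this combination cleanly, and tracking the supports across the non-central projections $e$ and $f$, is the step I expect to require the most care.
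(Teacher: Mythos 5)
Your reduction-and-gluing architecture is essentially the paper's: the paper also first proves $\phi(Q_i\mathcal A)\subseteq Q_iLS(\mathcal M)$ (your "decomposition assertion"), then cuts a factorization by the $Q_i$ and applies the hypothesis to the restrictions $\phi_i$. Your lifting $A=X+(I-Q_i)G$, $B=Y+(I-Q_i)$ with $AB=G$ is correct (indeed it fills in a step the paper leaves implicit), and granting the assertion, your gluing is sound. The genuine gap is exactly where you predicted trouble: your proposed proof of the assertion does not close. The relations your factorizations produce are: from the scaling family $U_\lambda=Q_i+\lambda Q_i^{\perp}$ (and its Laurent expansion), constraints only on $\phi$ evaluated at central cuts of $G$ itself, such as $Q_i^{\perp}\phi(Q_iG)=0$ and $\phi(Q_i)Q_i^{\perp}G=0$ --- pushing the same trick through arbitrary invertibles $R\in\mathcal M$ yields at best $f\phi(A)=fA\phi(I)$ and $\phi(A)e=\phi(I)Ae$ with $e=l(G)$, $f=r(G)$; and from the kernel/cokernel deformations, $f\phi((I-f)Y)=0$ and $\phi(Y(I-e))e=0$. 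None of these relations sees the corner $(I-f)\phi(\cdot)(I-e)$: for $A\in Q_k\mathcal A$ they force only $f(I-Q_k)\phi(Q_kA)=0$ and $(I-Q_k)\phi(Q_kA)e=0$, which is strictly weaker than the needed $(I-Q_k)\phi(Q_kA)=0$.

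That this is a real obstruction and not bookkeeping: take $\mathcal M=M_2(\mathbb C)\oplus M_2(\mathbb C)$, $Q_1=(I,0)=G$ (so $e=f=Q_1$, $LS(\mathcal M)=\mathcal M$, and $t(\mathcal M)$ is the norm topology), and define $\phi((A_1,A_2))=(cA_1,\tau(A_1)X)$ where $\tau$ is a nonzero linear functional on $M_2(\mathbb C)$ with $\tau(I)=0$ and $X\neq0$ is fixed. One checks directly that this $\phi$ satisfies every relation in the list above, yet $Q_2\phi(Q_1A)=(0,\tau(A_1)X)\neq0$ in general. (Of course $\phi$ is not centralizable at $G$; the point is that the consequences of centralizability you extract genuinely underdetermine $\phi$, so no appeal to continuity and $t(\mathcal M)$-density --- your "should yield" --- can finish from them alone.) The missing idea is the paper's hybrid factorization, which couples an \emph{arbitrary} invertible $A_i\in Q_i\mathcal M$ to the complementary piece of $G$: for $t\neq0$,
$(I-Q_i+t^{-1}GA_i^{-1})((I-Q_i)G+tA_i)=G$;
applying $\phi$ to the right-hand factor and equating the coefficient of $t$ gives $(I-Q_i)\phi(A_i)=0$ outright, after which sum-of-two-invertibles in the von Neumann algebra $Q_i\mathcal M$, Lemma \ref{201}, Lemma \ref{302} and continuity yield $\phi(Q_i\mathcal A)\subseteq Q_iLS(\mathcal M)$. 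The crucial feature your families lack is this free parameter: your deformations only ever place $G$-dependent elements in the $Q_i$-slot, so they cannot constrain $\phi$ on elements of $Q_i\mathcal A$ unrelated to $G$.
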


\begin{proof}
Let $\phi$ be in $L_{t(\mathcal M)}(\mathcal A, LS(\mathcal M))$ centralizable at $G$.

Firstly, we show that $\phi(Q_i\mathcal A)\subseteq Q_iLS(\mathcal M)$.
Let $A_i$ be an invertible element in $Q_i\mathcal M$, and $t$ be an arbitrary nonzero element in $\mathbb{C}$.
It is easy to show that
$$(I-Q_i+t^{-1}GA_i^{-1})((I-Q_i)G+tA_i)=G.$$
Thus we have that
$$(I-Q_i+t^{-1}GA_i^{-1})\phi((I-Q_i)G+tA_i)=\phi(G).$$
Considering the coefficient of $t$, since $t$ is arbitrarily chosen,
we know that $(I-Q_i)\phi(A_i)=0$. It follows that $\phi(A_i)=Q_i\phi(A_i)\in Q_iLS(\mathcal M)$ for every invertible element $A_i$ in $Q_i\mathcal M$.
Clearly, $Q_i\mathcal M$ is a von Neumann algebra
and every element in $Q_i\mathcal M$ can be written into the sum of two invertible elements in $Q_i\mathcal M$.
Hence we have that $\phi(A_i)\in Q_iLS(\mathcal M)$ for every $A_i$ in $Q_i\mathcal M$.

For every $A_i$ in $Q_i\mathcal A$, by Lemma \ref{201}, we know that there exists a sequence $\{A_i^n\}$ in $Q_i\mathcal M$
converging to $A_i$ with respect to the
local measure topology $t(\mathcal M)$.
By Lemma \ref{301}, we know that $Q_iLS(\mathcal M)=LS(Q_i\mathcal M)$,
since $\phi(A_i^n)\in Q_iLS(\mathcal M)=LS(Q_i\mathcal M)$ and $\phi$ is continuous with respect to the
local measure topology $t(\mathcal M)$, we have that $\phi(A_i)\in Q_iLS(\mathcal M)$ for every $A_i$ in $Q_i\mathcal A$.

In the following we show that $\phi$ is a centralizer  from $\mathcal{A}$ into $LS(\mathcal{M})$.

Suppose that $A$ and $B$ are two elements in $\mathcal A$ with $AB=G$.
Since $\{Q_i\}_{i=1}^{n}$ is a family of mutually orthogonal central projections in $\mathcal A$ with sum $I$,
we know that there exist some elements $A_i$, $B_i$ and $G_i$
in $Q_i\mathcal{A}$ with $A=\sum_{i=1}^{n}A_i$, $B=\sum_{i=1}^{n}B_i$ and $G=\sum_{i=1}^{n}G_i$. Moreover, we have that
$A_iB_i=G_i$.

Denote the restriction of $\phi$ in $Q_i\mathcal{A}$ by $\phi_i$.
By $\phi(\mathcal{A}_i)\subseteq Q_iLS(\mathcal{M})$, it implies that
$$\sum_{i=1}^{n}\phi(G_i)=\phi(G)=\phi(A)B=\sum_{i=1}^{n}\phi(A_i)\sum_{i=1}^{n}B_i=\sum_{i=1}^{n}\phi(A_i)B_i.$$
Hence we can obtain that $\phi_i(G_i)=\phi_i(A_i)B_i$.
Similarly, we have that $\phi_i(G_i)=A_i\phi_i(B_i)$.
By assumption, $G_i$ is a full-centralizable point of $L_{t(\mathcal M)}(Q_i\mathcal A,Q_iLS(\mathcal M))$ for every $i\in\overline{1,n}$,
that is $\phi_i$ is a centralizer from $Q_i\mathcal A$ into $Q_iLS(\mathcal M)) $.
It follows that
$$\phi(A)=\sum_{i=1}^{n}\phi_i(A_i)=\sum_{i=1}^{n}\phi_i(Q_i)A_i=\sum_{i=1}^{n}\phi_i(Q_i)\sum_{i=1}^{n}A_i=\phi(I)A.$$
Similarly, we can prove $\phi(A)=A\phi(I)$.
It means that $G$ is a full-centralizable point of $L_{t(\mathcal M)}(\mathcal A,LS(\mathcal M))$.
\end{proof}

For a unital algebra $\mathcal{A}$ and a unital left $\mathcal{A}$-module $\mathcal{M}$, we call an element $A$ in
$\mathcal{A}$ a \emph{right separating point}
of $\mathcal{M}$ if $MA=0$ implies $M=0$ for every $M\in\mathcal{M}$.
It is easy to see that every right invertible element in $\mathcal{A}$ is a right separating point of $\mathcal{M}$.

\begin{lemma}\label{304}
Suppose that $G$ is a fixed element in $\mathcal{A}$.
If $G$ is injective and the range of $G$ is dense in $\mathcal H$,
then $G$ is a full-centralizable point of of $L_{t(\mathcal M)}(\mathcal A,LS(\mathcal M))$.
\end{lemma}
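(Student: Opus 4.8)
The plan is to exploit that the hypotheses make $G$ invertible in $LS(\mathcal{M})$, and then to run a factorization argument that first pins down $\phi$ on $\mathcal{M}$ and afterwards propagates the answer to all of $\mathcal{A}$ by $t(\mathcal{M})$-continuity.

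First I would record the structural consequence of the hypotheses. Writing the polar decomposition $G=U|G|$ with $U\in\mathcal{M}$, injectivity of $G$ gives $r(G)=U^*U=I$ and density of the range gives $l(G)=UU^*=I$, so $U$ is a unitary in $\mathcal{M}$ and $|G|$ is a positive, injective element of $LS(\mathcal{M})$ with dense range. By the Muratov--Chilin theory $|G|^{-1}\in LS(\mathcal{M})$, whence $G^{-1}=|G|^{-1}U^*\in LS(\mathcal{M})$; thus $G$ is invertible in $LS(\mathcal{M})$. In particular $G$ is simultaneously a right separating point (dense range: $MG=0\Rightarrow M=0$) and a left separating point (injectivity: $GM=0\Rightarrow M=0$) of the $\mathcal{A}$-module $LS(\mathcal{M})$. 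This invertibility is the one genuinely analytic ingredient and, I expect, the main obstacle: cancelling $G$ for possibly unbounded locally measurable operators must be justified through the support and spectral calculus rather than by naive pointwise reasoning.

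Next, let $\phi\in L_{t(\mathcal{M})}(\mathcal{A},LS(\mathcal{M}))$ be centralizable at $G$. From the trivial factorizations $I\cdot G=G\cdot I=G$ I get $\phi(I)G=\phi(G)=G\phi(I)$. Now fix an invertible $W\in\mathcal{M}$; since $W^{-1}\in\mathcal{M}\subseteq\mathcal{A}$, both $W^{-1}G$ and $GW^{-1}$ lie in $\mathcal{A}$, and $W\cdot(W^{-1}G)=G=(GW^{-1})\cdot W$. Applying centralizability to the first factorization yields $\phi(W)W^{-1}G=\phi(G)=\phi(I)G$, and right-separation cancels $G$ to give $\phi(W)=\phi(I)W$; the second factorization together with left-separation gives $\phi(W)=W\phi(I)$. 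Since every element of $\mathcal{M}$ is a sum of two invertibles and $\phi$ is linear, I conclude $\phi(W)=\phi(I)W=W\phi(I)$ for all $W\in\mathcal{M}$.

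Finally I would pass from $\mathcal{M}$ to $\mathcal{A}$. By Lemma \ref{201} each $A\in\mathcal{A}\subseteq LS(\mathcal{M})$ is a $t(\mathcal{M})$-limit of its partial sums $\sum_{i\le n}Z_iA\in\mathcal{M}$, so $\mathcal{M}$ is $t(\mathcal{M})$-dense in $\mathcal{A}$. Choosing $W_n\in\mathcal{M}$ with $W_n\to A$, continuity of $\phi$ gives $\phi(W_n)\to\phi(A)$, while joint $t(\mathcal{M})$-continuity of multiplication in the topological $*$-algebra $LS(\mathcal{M})$ gives $\phi(I)W_n\to\phi(I)A$ and $W_n\phi(I)\to A\phi(I)$. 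Passing to the limit in $\phi(W_n)=\phi(I)W_n=W_n\phi(I)$ yields $\phi(A)=\phi(I)A=A\phi(I)$ for every $A\in\mathcal{A}$. Since $\mathcal{A}$ is unital, this is exactly the statement that $\phi$ is a centralizer, completing the proof.
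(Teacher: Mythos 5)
Your overall skeleton is the same as the paper's: use the factorizations $W\cdot(W^{-1}G)=G=(GW^{-1})\cdot W$ for invertible $W\in\mathcal M$, cancel $G$ by a separation property, extend to all of $\mathcal M$ by writing elements as sums of two invertibles, and then pass to $\mathcal A$ by $t(\mathcal M)$-continuity and the density supplied by Lemma~\ref{201}. But the step you yourself flag as ``the one genuinely analytic ingredient'' is wrong as stated: it is \emph{not} true that an injective positive $|G|\in LS(\mathcal M)$ with dense range has $|G|^{-1}\in LS(\mathcal M)$, so $G$ need not be invertible in $LS(\mathcal M)$. Concrete counterexample, fully within the standing hypotheses: take $\mathcal M=B(\mathcal H)$ (a type $\mathrm{I}_\infty$ factor, so no type $\mathrm{I}_1$ or type $\mathrm{II}$ summands), for which the paper notes $LS(\mathcal M)=\mathcal M$, and let $G$ be a positive injective compact operator with dense range. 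Then $G$ satisfies the hypotheses of the lemma, yet $G^{-1}$ is unbounded and hence not in $LS(\mathcal M)=B(\mathcal H)$. A telltale sign that your route cannot work is that invertibility of $G$ in $LS(\mathcal M)$ would make the separation properties trivial and would never use the assumption that $\mathcal M$ has no type $\mathrm{II}$ direct summand --- an assumption the paper's proof genuinely needs at exactly this point.

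The gap is repairable, and the repair is precisely the paper's first paragraph: to show $G$ is a right separating point of $LS(\mathcal M)$, take $M\in LS(\mathcal M)$ with $MG=0$ and invoke Lemma~\ref{201} (valid because $\mathcal M$ has no type $\mathrm{II}$ summand) to write $M=\sum_{i=1}^{\infty}Z_iM$ with the $Z_i$ mutually orthogonal central projections and each $Z_iM\in\mathcal M$ \emph{bounded}; then $Z_iMG=0$, and a bounded operator vanishing on the dense range of $G$ is zero, so each $Z_iM=0$ and $M=0$. Left separation follows symmetrically from $\ker G=\{0\}$ applied to the bounded pieces $Z_iM$. Your worry about ``naive pointwise reasoning'' for unbounded operators is exactly right --- for an unbounded closed $M$ one cannot directly conclude from $MG=0$ (a product formed by closure in $LS(\mathcal M)$) that $M$ kills a dense set in a usable way --- but the cure is the central reduction to bounded operators, not invertibility of $G$. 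With the separation lemmas established this way, the rest of your argument (including the density of $\mathcal M$ in $\mathcal A$ via partial sums $\sum_{i\le n}Z_iA$ and joint continuity of multiplication in $(LS(\mathcal M),t(\mathcal M))$) goes through and coincides with the paper's proof.
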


\begin{proof}
Firstly, we show that $G$ is a right separating point of $LS(\mathcal M)$.
Let $A$ be in $LS(\mathcal{M})$ with $AG=0$, by Lemma \ref{201}, we know that
there exists a sequence $\{Z_i\}$ of mutually orthogonal central projections in $\mathcal{M}$
with $\sum_{i=1}^{\infty}Z_i=I$, such that $A=\sum_{i=1}^{\infty}Z_iA$ and $Z_iA\in\mathcal{M}$ for every $i$.

By $AG=0$, we have that $\sum_{i=1}^{\infty}Z_iAG=0$.
Since $\{Z_i\}$ are mutually orthogonal projections,
it follows that $Z_iAG=0$. By the range of $G$ is dense in $\mathcal{H}$
and $Z_iA\in\mathcal{M}$ for every $i$, it is easy to show that
$Z_iA=0$ for every $i$. It means that $A=\sum_{i=1}^{\infty}Z_iA=0$.

Let $\phi$ be in $L_{t(\mathcal M)}(\mathcal A, LS(\mathcal M))$ centralizable at $G$
and $A$ be an invertible element in $\mathcal M$. It follows that
$$\phi(I)G=\phi(G)=\phi(AA^{-1}G)=\phi(A)A^{-1}G.$$
Since $G$ is a right separating point of $LS(\mathcal M)$, we have $\phi(I)=\phi(A)A^{-1}$.
That is $\phi(A)=\phi(I)A$ for every invertible element $A$ in $\mathcal M$.
It follows that $\phi(A)=\phi(I)A$ for every $A$ in $\mathcal M$.
Since $\phi$ is continuous with respect to the
local measure topology $t(\mathcal M)$,
we know that $\phi(A)=\phi(I)A$ for every $A$ in $\mathcal A$.

Similarly, by $ker(G)=\{0\}$, we can obtain that $\phi(A)=A\phi(I)$  for every $A$ in $\mathcal A$.
It means that $\phi$ is a centralizer from $\mathcal{A}$ into $LS(\mathcal M)$.
\end{proof}

\begin{lemma}\label{305}
$G=0$ is a full-centralizable point of $L_{t(\mathcal M)}(\mathcal A,LS(\mathcal M))$.
\end{lemma}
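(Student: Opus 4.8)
The plan is to exploit the abundance of projections in $\mathcal M$ together with the continuity of $\phi$ and the $t(\mathcal M)$-density of $\mathcal M$ in $\mathcal A$ furnished by Lemma \ref{201}. Throughout, observe that linearity forces $\phi(0)=0$, so the requirement that $\phi$ be centralizable at $0$ says precisely that $AB=0$ implies both $\phi(A)B=0$ and $A\phi(B)=0$ for all $A,B\in\mathcal A$.

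The core step is to establish the two one-sided multiplier identities
$$\phi(AP)=\phi(A)P \quad\text{and}\quad \phi(PA)=P\phi(A)$$
for every $A\in\mathcal A$ and every projection $P\in\mathcal M$. For the first I would apply the hypothesis to the two vanishing products $\bigl(A(I-P)\bigr)P=0$ and $(AP)(I-P)=0$: the former yields $\phi(A)P=\phi(AP)P$ and the latter yields $\phi(AP)=\phi(AP)P$, and combining them gives $\phi(AP)=\phi(A)P$. The second identity is obtained symmetrically from $P\bigl((I-P)A\bigr)=0$ and $(I-P)(PA)=0$, using this time the relations of the form $A'\phi(B')=0$. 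Only projections lying in $\mathcal M\subseteq\mathcal A$ are needed here, and the argument uses nothing beyond the centralizability relations.

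Next I would promote these identities from projections to the whole algebra by linearity and density. Since every von Neumann algebra is the norm-closed linear span of its projections, and since the norm topology on $\mathcal M$ is finer than $t(\mathcal M)$ while $\mathcal M$ is $t(\mathcal M)$-dense in $\mathcal A$ by Lemma \ref{201}, the linear span of the projections of $\mathcal M$ is $t(\mathcal M)$-dense in $\mathcal A$. Fixing $A$, both $B\mapsto\phi(AB)$ and $B\mapsto\phi(A)B$ are $t(\mathcal M)$-continuous (the former because it is the composition of the continuous left multiplication $B\mapsto AB$ in the topological $*$-algebra $LS(\mathcal M)$ with the continuous map $\phi$, the latter because left multiplication by $\phi(A)$ is continuous); by the first identity they agree on the dense span of projections, hence everywhere, so $\phi(AB)=\phi(A)B$ for all $A,B\in\mathcal A$. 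The symmetric argument with the second identity gives $\phi(AB)=A\phi(B)$, and together these say exactly that $\phi$ is a centralizer.

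The main obstacle is not the algebraic identity but the passage from the bounded algebra $\mathcal M$ to the algebra $\mathcal A$ of possibly unbounded locally measurable operators: one must be sure that the local measure topology makes multiplication continuous and that the span of projections remains dense for $t(\mathcal M)$. Both are secured by the fact that $(LS(\mathcal M),t(\mathcal M))$ is a complete topological $*$-algebra together with Lemma \ref{201}, which is where the hypothesis that $\mathcal M$ has no direct summand of type $\mathrm{II}$ enters; the algebraic heart of the argument is otherwise insensitive to the type of $\mathcal M$.
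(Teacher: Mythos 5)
Your proof is correct, but it takes a genuinely different route from the paper's. The paper handles the zero-product condition by forming the bilinear map $\varphi(A,B)=\phi(A)B$ on $\mathcal M\times\mathcal M$ and invoking Bre\v{s}ar's theorem \cite[Theorem~4.1]{M. Bresar 3} on algebras generated algebraically by idempotents --- this is precisely where the absence of type $\mathrm{I}_1$ summands enters, since abelian von Neumann algebras are not algebraically generated by their idempotents --- obtaining $\phi(A)=\phi(I)A$ on all of $\mathcal M$ by pure algebra, and only then extending to $\mathcal A$ by $t(\mathcal M)$-continuity and the density supplied by Lemma \ref{201}. You instead run the elementary Peirce-type computation from the vanishing products $\bigl(A(I-P)\bigr)P=0$ and $(AP)(I-P)=0$ (and their left-handed counterparts) to get $\phi(AP)=\phi(A)P$ and $\phi(PA)=P\phi(A)$ directly for every $A\in\mathcal A$ and every projection $P\in\mathcal M$, then use continuity twice: once to pass from the span of projections to $\mathcal M$ --- legitimate, since the norm ball $\{T\in\mathcal M:\|T\|_{\mathcal M}\le\varepsilon\}$ lies in each $V(B,\varepsilon,\gamma)$ (take $P=Z=I$ in its definition), so the norm topology on $\mathcal M$ is indeed finer than $t(\mathcal M)$ --- and once to pass from $\mathcal M$ to $\mathcal A$ via Lemma \ref{201}; the separate continuity of multiplication you need is available because $(LS(\mathcal M),t(\mathcal M))$ is a topological $*$-algebra by Yeadon's result quoted in Section 2. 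What your route buys: it avoids Bre\v{s}ar's theorem entirely and, as you correctly observe, never uses the no-type-$\mathrm{I}_1$ hypothesis, so your argument proves this lemma for any von Neumann algebra without type $\mathrm{II}$ summands (the span of projections is norm-dense in every von Neumann algebra, abelian ones included). What the paper's route buys: the multiplier identity on $\mathcal M$ is established without any continuity, so continuity is confined to the single extension step from $\mathcal M$ to $\mathcal A$; since the lemma assumes $t(\mathcal M)$-continuity anyway, your additional uses of it cost nothing here. One cosmetic difference: you conclude with the identities $\phi(AB)=\phi(A)B=A\phi(B)$ for all pairs rather than $\phi(A)=\phi(I)A=A\phi(I)$, but in the unital algebra $\mathcal A$ these formulations are equivalent.
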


\begin{proof}
Since $\mathcal{M}$ is a von Neumann algebra without direct summand of type $\mathrm{I}_1$,
it is well known that $\mathcal{M}$ is generated algebraically by all
idempotents in $\mathcal{M}$.

Let $\phi$ be in $L_{t(\mathcal M)}(\mathcal A, LS(\mathcal M))$ centralizable at $G$.
Define a bilinear mapping $\varphi$ from $\mathcal{M}\times\mathcal{M}$
into $LS(\mathcal M)$ by
$\varphi(A,B)=\phi(A)B$
for each $A,B$ in $\mathcal{M}$. By assumption we have that $\phi(A)B=0$ for each $A,B$ in $\mathcal{M}$ with $AB=0$.
It follows that $AB=0$ implies $\varphi(A,B)=0$.
By \cite[Theorem~4.1]{M. Bresar 3}, we can obtain that $\varphi(A,I)=\varphi(I,A)$, it implies that
$\phi(A)=\phi(I)A$ for every $A$ in $\mathcal{M}$.
It also holds for every $A$ in $\mathcal A$,
since $\phi$ is continuous with respect to the
local measure topology $t(\mathcal M)$.

Similarly, we can prove that $\phi(A)=A\phi(I)$ for every $A$ in $\mathcal A$.
It means that $\phi$ is a centralizer from $\mathcal{A}$ into $LS(\mathcal M)$.
\end{proof}

Let $A$ be an element in $\mathcal A$. The central carrier $\mathcal{C}(A)$
of $A$ in a von Neumann algebra $\mathcal M$ is the projection $I-P$,
where $P$ is the union of all central projections $P_\alpha$ in $\mathcal Z(\mathcal M)$
such that $P_\alpha A=0$.

\begin{lemma}\label{306}
Suppose that $G$ is a fixed element in $\mathcal{A}$.
If $\mathcal{C}(P)=\mathcal{C}(I-P)=I$, where $P$ is
the range projection of $G$,
then $G$ is a full-centralizable point of $L_{t(\mathcal M)}(\mathcal A,LS(\mathcal M))$.
\end{lemma}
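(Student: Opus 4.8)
The plan is to prove the two module identities $\phi(A)=\phi(I)A$ and $\phi(A)=A\phi(I)$ for every $A\in\mathcal A$, together with the fact that $\phi(I)$ commutes with every element of $\mathcal A$; by the characterization of centralizers on unital algebras recalled in the Introduction this shows that $\phi$ is a centralizer. Since multiplication in $(LS(\mathcal M),t(\mathcal M))$ is $t(\mathcal M)$-continuous and $\mathcal M$ is $t(\mathcal M)$-dense in $\mathcal A\subseteq LS(\mathcal M)$ by Lemma \ref{201}, it suffices to establish these identities for $A\in\mathcal M$ and then pass to an arbitrary element of $\mathcal A$ along a net in $\mathcal M$, exactly as in the final lines of Lemmas \ref{304} and \ref{305}. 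Throughout I write $P_1=P$, $P_2=I-P$ and $q=r(G)$ for the right support; recall that $PG=G$, $Gq=G$ and $P\sim q$, so that $\mathcal C(q)=\mathcal C(P)=I$.

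The factorization engine is the device already used in Lemma \ref{303}. First, for every invertible $W\in\mathcal M$ the identities $G=(GW^{-1})W=W(W^{-1}G)$ are valid factorizations of $G$ with both factors in $\mathcal A$, so centralizability at $G$ gives $\phi(GW^{-1})=\phi(G)W^{-1}$ and $\phi(W^{-1}G)=W^{-1}\phi(G)$. As every element of a von Neumann algebra is a linear combination of invertibles, this yields $\phi(GY)=\phi(G)Y$ and $\phi(YG)=Y\phi(G)$ first for $Y\in\mathcal M$ and then, by continuity, for all $Y\in\mathcal A$. Next, perturbing the factors of the basic factorizations $G=P\cdot G=G\cdot q$ by elements annihilating the relevant support produces a set of support relations: from $(P+C)G=G$ whenever $CP=0$ one extracts $\phi(C)G=0$, hence $\phi(\mathcal A P_2)\subseteq LS(\mathcal M)P_2$; from $P(G+C)=G$ whenever $PC=0$ one extracts $P\phi(C)=0$, hence $\phi(P_2\mathcal A)\subseteq P_2 LS(\mathcal M)$; the analogous perturbations of $G=G\cdot q$ give $\phi(\mathcal A(I-q))\subseteq LS(\mathcal M)(I-q)$ and $\phi((I-q)\mathcal A)\subseteq(I-q)LS(\mathcal M)$.

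The remaining and hardest step is to convert these relations, which control $\phi$ only on the corners attached to $P_2$ and $I-q$, into the global module identity on $\mathcal M$; this is exactly where the hypothesis $\mathcal C(P)=\mathcal C(I-P)=I$ must be used. Using the comparison theorem together with $\mathcal C(P_1)=\mathcal C(P_2)=I$, I would produce a family of partial isometries $V\in\mathcal M$ with $V^*V\le P_2$, $VV^*\le P_1$ whose ranges have central carrier covering $P_1$, and symmetrically a family adapted to $q$. Each such $V$ satisfies $VP=0$ and $qV=0$ type relations which, fed into the support relations and into the identities $\phi(GY)=\phi(G)Y$, $\phi(YG)=Y\phi(G)$, transport the control of $\phi$ from the $P_2$- and $(I-q)$-blocks to the $P_1$- and $q$-blocks. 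Carried out uniformly over all central summands, this forces $\phi$ to respect the full Peirce grading relative to $P$, reduces the diagonal to a zero-product configuration, and lets me invoke the theorem on bilinear maps preserving zero products \cite{M. Bresar 3} precisely as in Lemma \ref{305} to conclude $\phi(A)=\phi(I)A$ on $\mathcal M$. The symmetric argument gives $\phi(A)=A\phi(I)$, and comparing the two identities shows that $\phi(I)$ commutes with every $A\in\mathcal M$.

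Finally, continuity of $\phi$ and of multiplication propagate $\phi(A)=\phi(I)A=A\phi(I)$ from $\mathcal M$ to all of $\mathcal A$, so $\phi$ is a centralizer and $G$ is a full-centralizable point. I expect the third paragraph to be the genuine obstacle: the factorization engine only sees the supports $P_2$ and $I-q$ directly, and it is the central-carrier hypothesis, exploited through the comparison theorem and a covering family of partial isometries, that has to do the work of symmetrizing the two corners and of pinning $\phi(I)$ down as a central element. Organizing this transport so that it applies simultaneously across every central summand, without ever needing $G$ to be invertible, is the delicate point.
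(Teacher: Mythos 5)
Your first two paragraphs are sound but only reproduce the easy part of the argument: the identities $\phi(GY)=\phi(G)Y$, $\phi(YG)=Y\phi(G)$ and the four support relations (e.g.\ $\phi(C)P=0$ whenever $CP=0$, obtained from the factorization $(P+C)G=G$) all check out, and they correspond to fragments of the paper's Claims 1--4. The genuine gap is exactly where you yourself flag it: your third paragraph is a declaration of intent, not a proof. You never exhibit the transport computation --- how a partial isometry $V$ with $V^*V\le P_2$, $VV^*\le P_1$, fed into the support relations, is supposed to yield the corner containments $\phi(P_i\mathcal{A}P_j)\subseteq P_iLS(\mathcal{M})P_j$, let alone the module identities $\phi(A_{ij})=\phi(P_i)A_{ij}=A_{ij}\phi(P_j)$ --- and it is these identities, not the support relations, that sum to $\phi(A)=\phi(I)A=A\phi(I)$.

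Moreover, the specific fallback you name would fail. Bre\v{s}ar's zero-product theorem applies in Lemma \ref{305} only because $G=0$ turns centralizability into the hypothesis $AB=0\Rightarrow\phi(A)B=0$ on all of $\mathcal{M}$, and because $\mathcal{M}$, having no type $\mathrm{I}_1$ summand, is generated by its idempotents. At $G\neq0$ centralizability yields no zero-product information, and the diagonal corners $P_i\mathcal{M}P_i$ on which you propose to invoke the theorem ``precisely as in Lemma \ref{305}'' may well have abelian summands (take $\mathcal{M}=M_2(\mathbb{C})$ and $P$ of rank one: then $P\mathcal{M}P\cong\mathbb{C}$ is type $\mathrm{I}_1$), so they need not be idempotent-generated. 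The paper closes the gap by a purely algebraic device absent from your proposal: one-parameter factorizations such as $(A_{11}+tA_{11}A_{12})(A_{11}^{-1}G-A_{12}A_{22}+t^{-1}A_{22})=G$ with $A_{11}$ invertible in the corner $\mathcal{A}_{11}$ and $t\in\mathbb{C}$ arbitrary nonzero, from which comparing coefficients of the powers of $t$ extracts $\phi(A_{11}A_{12})=\phi(A_{11})A_{12}$, $\phi(A_{11})A_{22}=0$, and the analogous relations for the other corners; the hypothesis $\mathcal{C}(P)=\mathcal{C}(I-P)=I$ enters only through the elementary fact (Kadison--Ringrose, Proposition 5.5.2) that $\mathcal{C}(P_j)=I$ makes the range of $\mathcal{A}P_j$ dense in $\mathcal{H}$, so that $X\mathcal{A}_{ij}=0$ forces $XP_i=0$ (and, via adjoints, $\mathcal{A}_{ij}X=0$ forces $P_jX=0$). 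No comparison theorem or covering family of partial isometries is needed, and as it stands your argument does not close.
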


\begin{proof}
Let $P_1=P,~ P_2=I-P$
and denote $P_i\mathcal{A}P_j$ and $P_i LS(\mathcal{M})P_j$ by $\mathcal{A}_{ij}$ and $\mathcal{B}_{ij}$, respectively, $i,j=1,2$.
For every $A$ in $\mathcal{A}$, denote $P_iAP_j$ by $A_{ij}$.

Firstly, we claim that for every element $A$ in $LS(\mathcal{M})$, the condition $A\mathcal{A}_{ij}=0$ implies $AP_i=0$
and  $\mathcal{A}_{ij}A=0$ implies $P_jA=0$.

Indeed, since $\mathcal{C}(P_j)=I$, by \cite[Proposition 5.5.2]{514} and $\mathcal M\subseteq\mathcal A$,
we know that the range of $\mathcal{A}P_j$ is dense in $\mathcal H$.
Thus $AP_i\mathcal{A}P_j=0$ implies $AP_i=0$.
On the other hand, if $\mathcal{A}_{ij}A=0$, then $A^{*}\mathcal{A}_{ji}=0$. Hence $A^{*}P_j=0$ and $P_jA=0$.

Besides, since $P_1=P$ is the range projection of $G$, we have that $P_1G=G$.
Moreover, for every element $A$ in $LS(\mathcal{M})$, $AG=0$ if and only if $AP_1=0$.

Let $\phi$ be in $L_{t(\mathcal M)}(\mathcal A, LS(\mathcal M))$ centralizable at $G$.
In the following, we show that $\phi(\mathcal{A}_{ij})\subseteq\mathcal{B}_{ij}$, respectively, $i,j=1,2$.
Suppose that $A_{11}$ is an invertible element in $\mathcal{A}_{11}$, and
$A_{12}, A_{21}, A_{22}$ are arbitrary elements in $\mathcal{A}_{12},\mathcal{A}_{21}, \mathcal{A}_{22}$, respectively.
Let $t$ be an arbitrary nonzero element in $\mathbb{C}$.

\textbf{Claim 1:} $\phi(\mathcal{A}_{12})\subseteq\mathcal{B}_{12}$.

By $(P_1+tA_{12})G=G$, we have that $\phi(G)=\phi(P_1+tA_{12})G$.
It implies that $\phi(A_{12})G=0$. Hence $\phi(A_{12})P_1=0$.

By $(P_1+tA_{12})G=G$, we have that $\phi(G)=(P_1+tA_{12})\phi(G)$.
It follows that $A_{12}\phi(G)=A_{12}\phi(P_1)G=0$.
Thus $A_{12}\phi(P_1)P_1=0$ and $P_2\phi(P_1)P_1=0$.

By
$(A_{11}+tA_{11}A_{12})(A_{11}^{-1}G-A_{12}A_{22}+t^{-1}A_{22})=G,$
we have that
\begin{align}
\phi(A_{11}+tA_{11}A_{12})(A_{11}^{-1}G-A_{12}A_{22}+t^{-1}A_{22})=\phi(G).\label{203}
\end{align}
Since $t$ is arbitrarily chosen in \eqref{203}, we can obtain that
$$\phi(A_{11})(A_{11}^{-1}G-A_{12}A_{22})+\phi(A_{11}A_{12})A_{22}=\phi(G).$$
Since $A_{12}$ is also arbitrarily chosen,
we can obtain $\phi(A_{11})A_{12}A_{22}=\phi(A_{11}A_{12})A_{22}.$  Taking $A_{22}=P_2$, since $\phi(A_{12})P_1=0$, we have
\begin{align}
\phi(A_{11}A_{12})=\phi(A_{11})A_{12}.\label{205}
\end{align}
Taking $A_{11}=P_1$, by $P_2\phi(P_1)P_1=0,$ it implies that
\begin{align}
P_2\phi(A_{12})=P_2\phi(P_1)A_{12}=0.\label{207}
\end{align}
Thus we can obtain that
$$\phi(A_{12})=\phi(A_{12})P_1+P_1\phi(A_{12})P_2+P_2\phi(A_{12})P_2
=P_1\phi(A_{12})P_2\subseteq\mathcal{B}_{12}. $$

\textbf{Claim 2} $\phi(\mathcal{A}_{11})\subseteq\mathcal{B}_{11}$.

Considering the coefficient of $t^{-1}$ in \eqref{203}, we have that $\phi(A_{11})A_{22}=0.$ Thus
$\phi(A_{11})P_2=0.$
By \eqref{205}, we obtain that $P_2\phi(A_{11})A_{12}=P_2\phi(A_{11}A_{12})=0.$ It follows that
$P_2\phi(A_{11})P_1=0.$
Therefore, $\phi(A_{11})=P_1\phi(A_{11})P_1\subseteq\mathcal{B}_{11}$
for every invertible element $A_{11}$ in $\mathcal A_{11}$.
Since $\phi$ is continuous with respect to the
local measure topology $t(\mathcal M)$,
it implies that $\phi(A_{11})\subseteq\mathcal{B}_{11}$
for every $A_{11}$ in $\mathcal A_{11}$.

\textbf{Claim 3} $\phi(\mathcal{A}_{22})\subseteq\mathcal{B}_{22}$.

By
$(A_{11}+tA_{11}A_{12})(A_{11}^{-1}G-A_{12}A_{22}+t^{-1}A_{22})=G,$
we can obtain that
$$(A_{11}+tA_{11}A_{12})\phi(A_{11}^{-1}G-A_{12}A_{22}+t^{-1}A_{22})=\phi(G).$$
Through a similar discussion for equation \eqref{203}, we can show that
\begin{align}
P_1\phi(A_{22})=0~\mathrm{and}~\phi(A_{12}A_{22})=A_{12}\phi(A_{22}).\label{211}
\end{align}
Thus $A_{12}\phi(A_{22})P_1=\phi(A_{12}A_{22})P_1=0$.
It follows that
$P_2\phi(A_{22})P_1=0.$
Therefore, $\phi(A_{22})=P_2\phi(A_{22})P_2\subseteq\mathcal{B}_{22}. $

\textbf{Claim 4} $\phi(\mathcal{A}_{21})\subseteq\mathcal{B}_{21}$.

By
$(A_{11}+tA_{11}A_{12})(A_{11}^{-1}G-A_{12}A_{21}+t^{-1}A_{21})=G,$
we have that
$$(A_{11}+tA_{11}A_{12})\phi(A_{11}^{-1}G-A_{12}A_{21}+t^{-1}A_{21})=\phi(G).$$
According to this equation, we can similarly obtain that
$P_1\phi(A_{21})=0$
and
\begin{align}
A_{12}\phi(A_{21})=\phi(A_{12}A_{21}).\label{219}
\end{align}
Hence $A_{12}\phi(A_{21})P_2=\phi(A_{12}A_{21})P_2=0$.
It follows that $P_2\phi(A_{21})P_2=0$. Therefore, $\phi(\mathcal{A}_{21})=P_2\phi(A_{21})P_1\subseteq\mathcal{B}_{21}$.

\textbf{Claim 5} $\phi(A_{ij})=\phi(P_i)A_{ij}=A_{ij}\phi(P_j)$ for each $i,j\in \{1,2\}$.

By taking $A_{11}=P_1$ in \eqref{205}, we have that
$\phi(A_{12})=\phi(P_1)A_{12}$.
By taking $A_{22}=P_2$ in \eqref{211}, we have that
$\phi(A_{12})=A_{12}\phi(P_2)$.

By \eqref{205}, we have $\phi(A_{11})A_{12}=\phi(A_{11}A_{12})=\phi(P_1)A_{11}A_{12}$. It follows that
$\phi(A_{11})=\phi(P_1)A_{11}.$ On the other hand, $\phi(A_{11})A_{12}=\phi(A_{11}A_{12})=A_{11}A_{12}\phi(P_2)=A_{11}\phi(A_{12})=A_{11}\phi(P_1)A_{12}$.
It follows that $\phi(A_{11})=A_{11}\phi(P_1)$ for every invertible element $A_{11}$ and so for all elements in $\mathcal A_{11}$.

By \eqref{211} and \eqref{219}, through a similar discussion as above, we can obtain that
$\phi(A_{22})=A_{22}\phi(P_2)=\phi(P_2)A_{22}$
and
$\phi(A_{21})=A_{21}\phi(P_1)=\phi(P_2)A_{21}.$

Now we have proved that
$\phi(\mathcal{A}_{ij})\subseteq\mathcal{B}_{ij}$
and $\phi(A_{ij})=\phi(P_i)A_{ij}=A_{ij}\phi(P_j).$
It follows that
\begin{align*}
\phi(A)&=\phi(A_{11}+A_{12}+A_{21}+A_{22})\notag\\
&=\phi(P_1)(A_{11}+A_{12}+A_{21}+A_{22})+\phi(P_2)(A_{11}+A_{12}+A_{21}+A_{22})\notag\\
&=\phi(P_1+P_2)(A_{11}+A_{12}+A_{21}+A_{22})\notag\\
&=\phi(I)A.\notag
\end{align*}
Similarly, we can prove that $\phi(A)=A\phi(I)$.
\end{proof}

In the following, we give the proof of our main result.

\begin{proof}[Proof of the Theorem 3.1.]
Let $Q_1=I-\mathcal{C}(I-P)$, $Q_2=I-\mathcal{C}(P)$, and $Q_3=I-Q_1-Q_2$,
where $P$ is the range projection of $G$.
Obviously, $Q_1\leq P$ and $Q_2\leq I-P$, it follows that $\{Q_i\}_{i=1,2,3}$ are mutually orthogonal central projections with sum $I$.
Thus we have that
$\mathcal A=\sum\limits_{i=1}^{3}(Q_i\mathcal{A}).$
Denote $Q_i\mathcal{A}$ by $\mathcal{A}_i$.
For every element $A$ in $\mathcal{A}$, we can write $A=\sum\limits_{i=1}^{3}A_i=\sum\limits_{i=1}^{3}Q_iA$.

Next we divide the proof into two cases.

\textbf{Case 1:} Suppose that $G$ is injective, that is $ker(G)=\{0\}$.

Since $Q_1\leq P$, we have that $\overline{ran G_1}=\overline{ran Q_1G}=Q_1\mathcal H$.
By assumption we know that $G_1=Q_1G$ is injective on $Q_1\mathcal H$.
By Lemma \ref{304}, we know that $G_1$ is a full-centralizable point of $L_{t(\mathcal M)}(\mathcal A_1,LS(Q_1\mathcal M))$.

Since $Q_2\leq I-P$, we have that $G_2=Q_2G=0$.
By Lemma \ref{305}, we know that $G_2$ is a full-centralizable point of $L_{t(\mathcal M)}(\mathcal A_2,LS(Q_2\mathcal M))$.

Since $P$ is the range projection of $G$, it follows that $\overline{ran G_3}=\overline{ran Q_3G}=Q_3P=P_3$.
Denote the central carrier of $P_3$ in $\mathcal{A}_3$ by $\mathcal{C}_{\mathcal{A}_3}(P_3)$. We have that
$$Q_3-\mathcal{C}_{\mathcal{A}_3}(P_3)\leq Q_3-P_3=Q_3(I-P)\leq I-P.$$
Obviously, $Q_3-\mathcal{C}_{\mathcal{A}_3}(P_3)$ is a central projection orthogonal to $Q_2$.
Thus
$$Q_3-\mathcal{C}_{\mathcal{A}_3}(P_3)+I-\mathcal{C}(P)\leq I-P.$$
It implies that $Q_3-\mathcal{C}_{\mathcal{A}_3}(P_3)+P\leq \mathcal{C}(P)$.
Hence we have that $Q_3-\mathcal{C}_{\mathcal{A}_3}(P_3)=0$,
that is $\mathcal{C}_{\mathcal{A}_3}(P_3)=Q_3$. Similarly, we can show that
$\mathcal{C}_{\mathcal{A}_3}(Q_3-P_3)=Q_3$. By Lemma \ref{306},
we know that $G_3$ is a full-centralizable point of $L_{t(\mathcal M)}(\mathcal A_3,LS(Q_3\mathcal M))$.

By Lemma \ref{302}, we can obtain that $Q_iLS(\mathcal{M})=LS(Q_i\mathcal{M})$.
Hence $G_i$ is a full-centralizable point of $L_{t(\mathcal M)}(\mathcal A_i,Q_i LS(\mathcal M))$
for each $i=1,2,3$.

By Lemma \ref{303}, it follows that $G$ is a full-centralizable point of $L_{t(\mathcal M)}(\mathcal A,LS(\mathcal M))$.

\textbf{Case 2:} Suppose that $ker(G)\neq \{0\}$.

In this case, $G_2$ and $G_3$ are still full-centralizable points of
$L_{t(\mathcal M)}(\mathcal A_2,LS(Q_2\mathcal M))$ and $L_{t(\mathcal M)}(\mathcal A_3,LS(Q_3\mathcal M))$,
respectively.

Since $\overline{ran G_1}=Q_1\mathcal{H}$ , we have that $ker(G_1^{*})=\{0\}$.
By Case 1, we know that $G_1^{*}$ is a full-centralizable point of $L_{t(\mathcal M)}(\mathcal A_1,LS(Q_1\mathcal M))$.
Next we show that $G_1$ is also a full-centralizable point of $L_{t(\mathcal M)}(\mathcal A_1,LS(Q_1\mathcal M))$.

In fact, let $\phi_1$ be in $L_{t(\mathcal M)}(\mathcal A_1,LS(Q_1\mathcal M))$ centralizable at $G_1$.
Define a linear mapping $\widetilde{\phi_1}$ from $\mathcal{A}_1$ into $LS(Q_1\mathcal M)$
by $\widetilde{\phi_1}(A)=(\phi_1(A^*))^*$ for every $A$ in $\mathcal{A}_1$.
Suppose that $A$ and $B$ are two elements in $\mathcal A_1$ with $AB=G_1$, we have that $B^*A^*={G_1}^*$.
It follows that
$$\phi_1(G)=A\phi_1(B)=\phi_1(A)B.$$
By the definition of $\widetilde{\phi_1}$, we can obtain that
$$\widetilde{\phi_1}(G^*)=B^*\widetilde{\phi_1}(A^*)=\widetilde{\phi_1}(B^*)A^*.$$
Since $G^{*}$ is a full-centralizable point of $L_{t(\mathcal M)}(\mathcal A_1,LS(Q_1\mathcal M))$,
we have that $\widetilde{\phi_1}$ is a centralizer.
Thus $\phi_1$ is also a centralizer. It means that $G_1$ is a full-centralizable point of $L_{t(\mathcal M)}(\mathcal A_1,LS(Q_1\mathcal M))$.

By Lemma \ref{303}, we know $G$ is a full-centralizable point of $L_{t(\mathcal M)}(\mathcal A,LS(Q\mathcal M))$.
\end{proof}


\begin{thebibliography}{50}
\bibitem{Albeverio1} S. Albeverio, S. Ayupov, K. Kudaybergenov.
Derivations on the algebra of measurable operators affiliated with a type I von Neumann algebra.
Siberian Adv. Math., 2008, 18: 86--94.

\bibitem{Albeverio2} S. Albeverio, S. Ayupov, K. Kudaybergenov.
Structure of derivations on various algebras of measurable operators for type I von Neumann algebras.
J. Func. Anal., 2009, 256: 2917--2943.

\bibitem{513} G. An, J.Li.
Characterizations of linear mappings through zero products or zero Jordan products.
Electron. J. Linear Algebra, 2016, 31: 408--424.

\bibitem{Ber1} A. Ber, V. Chilin, F. Sukochev.
Non-trivial derivation on commutative regular algebras.
Extracta Math., 2006, 21: 107--147.

\bibitem{501} M. Bre\v{s}ar.
Characterizing homomorphisms, derivations and multipliers in rings with idempotents.
Proc. Roy. Soc. Edinburgh, Sect. A, 2007, 137: 9--21.

\bibitem{M. Bresar} M. Bre$\check{s}$ar, E. Kissin, S. Shulman.
Lie ideals: from pure algebra to C*-algebras.
J. reine angew. Math., 2008, 623: 73--121.

\bibitem{M. Bresar 3} M. Bre\v{s}ar.
Multiplication algebra and maps determined by zero products.
Linear Multilinear Algebra, 2012, 60: 763--768.

\bibitem{Hejun} J. He, J. Li, W. Qian. 
Characterizations of centralizers and derivations on some algebras. 
J. Korean Math. Soc., 2017, 54: 685--696.

\bibitem{514} R. Kadison, J. Ringrose.
Fundamentals of the Theory of Operator Algebras.
Academic Press Inc, 1983.

\bibitem{M. Muratov} M. Muratov, V. Chilin. 
Algebras of measurable and locally measurable operators.
Kyiv, Pratse In-ty matematiki NAN ukraini, 2007, 69: 390, (Russian).

\bibitem{M. Muratov1} M. Muratov, V. Chilin. 
Central extensions of *-algebras of measurable operators.
Reports of the National Academy of Science of Ukraine, 2009, 7: 24--28, (Russian).

\bibitem{502} X. Qi, J. Hou.
Characterizing centralizers and generalized derivations on triangular algebras by acting on zero product.
Acta Math. Sinica (Engl. Ser.) 2013, 29: 1245--1256.

\bibitem{503} X. Qi.
Characterization of centralizers on rings and operator algebras.
Acta Math. Sinica, 2013, 56: 459--468.

\bibitem{Segal} I. Segal. 
A non-commutative extension of abstract integration. 
Ann, Math., 1953, 57: 401--457.

\bibitem{M. Takesaki} M. Takesaki. 
Theory of operator algebras I, 
New York, Springer-Verlag, 1979.

\bibitem{519} W. Xu, R. An, J. Hou.
Equivalent characterization of centralizers on B($\mathcal H$).
Acta Mathematica Sinica, 2016, 32: 1113--1120.

\bibitem{F. Yeadon} F. Yeadon. 
Convergence of measurable operators. 
Proc. Camb. Phil. Soc., 1973, 74: 257-268.


\end{thebibliography}
\end{document}